\newtheorem{theorem}{Theorem}
\newtheorem{definition}[theorem]{Definition}
\newtheorem{example}[theorem]{Example}
\newtheorem{lemma}[theorem]{Lemma}
\newtheorem{remark}[theorem]{Remark}
\newenvironment{proof}[1][Proof]{\noindent\textbf{#1.} }{\ \rule{0.5em}{0.5em}}
\newcommand\blfootnote[1]{%
  \begingroup
  \renewcommand\thefootnote{}\footnote{#1}%
  \addtocounter{footnote}{-1}%
  \endgroup
}
\begin{document}
\title{Subspace Condition for Bernstein's Lethargy Theorem}
\date{\vspace{-5ex}}
\maketitle

Asuman G\"{u}ven AKSOY\\
corresponding author, Claremont McKenna College\\
Department of Mathematics, Claremont, CA  91711, USA \\
E-mail: aaksoy@cmc.edu \\

Monairah AL-ANSARI\\
Claremont Graduate University\\
Institute of Mathematical Sciences, Claremont, CA  91711, USA\\
E-mail: monairah.alansari@cgu.edu \\

Caleb CASE\\
Claremont McKenna College\\
Department of Mathematics, Claremont, CA  91711, USA \\
E-mail: caleb.case16@cmc.edu \\

Qidi PENG\\
Claremont Graduate University\\
Institute of Mathematical Sciences, Claremont, CA  91711, USA \\
E-mail: qidi.peng@cgu.edu \\

\vspace{.15 cm}

\mbox{~~~}\\
\mbox{~~~}\\
\noindent\textbf{Abstract:}  In this paper, we consider a condition on subspaces in order to improve bounds given in the Bernstein's Lethargy Theorem (BLT) for Banach spaces.  Let $d_1 \geq d_2 \geq \dots d_n \geq \dots > 0$ be an infinite sequence of numbers  converging to $0$, and let $Y_1 \subset Y_2 \subset \dots\subset Y_n \subset \dots \subset X$ be a sequence of  closed nested subspaces in a Banach space $X$  with the property that  $\overline{Y}_{n}\subset Y_{n+1}$  for all  $n\ge1$. We prove that  for any $c \in (0,1]$, there exists an element $x_c \in X$ such that
$$ c d_n \leq \rho(x_c, Y_n) \leq \min (4, \tilde{a}) c\, d_n.
$$
Here, $\rho(x, Y_n)= \inf \{ ||x-y||: \,\,y\in Y_n\}$,  $$\tilde{a} =\sup_{i\ge1}\sup_{\left \{ q_{i}  \right \}}\left \{  a_{n_{i+1}-1}^{-3}\right \}$$  where the sequence $\{a_n\}$  is defined as: for all $ n \geq 1 $,
$$
a_n = \inf_{l \geq n} \,  \inf_{q \in \langle q_l, q_{l+1},\dots \rangle} \frac{\rho(q,Y_l)}{||q||}
$$
in which each point $q_n$ is taken from  $Y_{n+1} \setminus Y_{n}$, and satisfies  $\inf\limits_{n\ge1} a_n  > 0$. The sequence  $\{n_i\}_{i\ge1}$ is given by
$$
n_1=1;~n_{i+1}= \min \left \{ n\ge1 : \frac{d_n}{{a_{n}^{2}}} \leq d_{n_{i}}\right \},~i\geq 1.
$$
\noindent\blfootnote{{\bf Mathematics Subject Classification (2000):}
41A25, 41A50, 41A65. \\\noindent\vskip1mm {\bf ~~Key words: } Best  Approximation, Bernstein's Lethargy Theorem, Banach Spaces.}

\section{Introduction}
Bernstein's Lethargy Theorem \cite{Bernstein} involves finding approximations of an element in a space $X$ when those approximations are limited to some sequence of subspaces. Before we can compare approximations, we need a function to determine how close an approximation is to the desired target. In the following we define a distance function which we call  the $\rho$-function:
\begin{definition}
	Let $(X,\|\cdot\|)$ be a Banach space, and let $S$ be a subspace of $X$. Then, for any point $x \in X$, we can define the distance from $x$ to $S$ as $$\mbox{dist}(x,S) = \rho(x,S) = \inf_{y\in S} ||x-y||.$$
\end{definition}
If $Y_1\subset Y_2 \subset \dots$ is a sequence of strictly embedded linear subspaces of $X$, then for each $x\in X$, there exists a non-increasing sequence of best approximation errors $$\rho(x, Y_1) \geq \rho(x, Y_2) \geq \dots.$$ The general objective is to characterize these sequences of best approximation errors. For example, one can ask if  it is  true that for any non-increasing sequence $\{d_n\}$ with $\lim\limits_{n \to \infty} d_n =0$, there exists an element $x\in X$ such that $$\rho(x, Y_n) =d_n \quad \mbox{for all}\quad  n=1,2,\dots. $$   Bernstein \cite{Bernstein} proved that in the case $X= C[a,b]$ and $ Y_n=P_n  $,  the space of polynomials of degree at most $n$, \textbf{any} sequence converging to zero is a sequence of best approximations.
This theorem is sometimes referred  to as Bernstein's Lethargy Theorem (BLT) and it has been applied to the theory of quasi analytic functions in several complex variables \cite{Ple2} and used in the constructive theory of functions \cite{Sin}. Note that the density of polynomials in $C[0,1]$ (Weierstrass Approximation Theorem \cite{Cheney}) implies that $\displaystyle \lim_{n \to \infty} \rho(f, P_n) = 0.$ However, Weierstrass Approximation Theorem gives no information about the speed of convergence for $\rho(f, Y_n)$.  Following the proof of Bernstein, Timan \cite{Timan} extended his result to an arbitrary system of strictly embedded \textit{finite-dimensional} subspaces $Y_n$.  Later Shapiro \cite{Sha}, replacing $C[0,1]$  with an arbitrary infinite-dimensional Banach space $(X, \|\cdot\|)$ and the sequence of $n$-dimensional subspaces of polynomials of degree $\leq n$ by a sequence $\{Y_n\}$ where $Y_1 \subset Y_2 \subset \cdots$ are strictly embedded \textit{closed subspaces} of $X$, showed that in this setting, for each null sequence $\{d_n\}$ of non-negative numbers, there is a vector $x\in X$ such that
$$ \rho(x, Y_n) \neq O(d_n),~\mbox{as}~n\to\infty.$$
Thus, there is no $M>0$ such that $\rho(x, Y_n) \leq M d_n$ for all $n$. In other words, $\rho(x, Y_n)$ can decay arbitrarily slowly. This result  was strengthened by Tyuriemskih \cite{Tyu} who established that the sequence of best approximations may converge to zero at an arbitrary slow rate:  for any expanding sequence $\{Y_n\}$ of subspaces and for any sequence $\{d_n\}$ of positive numbers converging to zero, he constructed an element $x\in X$  such that $\displaystyle\lim_{n \rightarrow \infty} \rho(x, Y_n) =0$ and $\rho(x, Y_n) \geq d_n$ for all $n.$ For a generalization of Shapiro's Theorem we refer the reader to \cite{Al-Oi}. For an application of Tyuriemskih's Theorem to convergence of sequence of bounded linear operators, consult \cite{De-Hu}. For other versions of Bernstein's Lethargy Theorem, see \cite{Ak-Al,Ak-Le,Ak-Le2,Al-To,Lew1,Pli}.\\

We now consider the following well-known  Bernstein's Lethargy Theorem \cite{Bernstein}, stated for the case of  finite-dimensional subspaces of a Banach space $X$.

\begin{theorem}[Lethargy] \label{Bernstein}
	Given a Banach space $X$ and a series of nested finite-dimensional subspaces $Y_1 \subset Y_2 \subset \dots \subset X$. If $\{d_k\}_{k\ge1}$ is a monotone decreasing sequence converging to $0$, then there exists a point $x \in X$ such that $\rho(x,Y_k) = d_k$ for all $k \ge1$.
\end{theorem}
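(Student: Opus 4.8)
The statement is to be read with \emph{strict} inclusions $Y_n\subsetneq Y_{n+1}$ (otherwise $\rho(x,Y_n)=\rho(x,Y_{n+1})$ forces $d_n=d_{n+1}$); this is Timan's setting \cite{Timan}. The plan is to produce a norm-Cauchy sequence $(x_N)_{N\ge 1}$ with $x_N\in Y_{N+1}$, $\|x_N\|\le 3d_1$, and $\rho(x_N,Y_k)=d_k$ for $1\le k\le N$. Granting this, put $x:=\lim_N x_N$: since each map $z\mapsto\rho(z,Y_k)$ is $1$-Lipschitz and $\rho(x_N,Y_k)=d_k$ for all $N\ge k$, we get $\rho(x,Y_k)=d_k$ for every $k$, which is the theorem.

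Two facts are used throughout. First, translation invariance: $\rho(z+y,S)=\rho(z,S)$ whenever $y\in S$. Second --- the one place where finite dimensionality, rather than mere closedness, is decisive --- the infimum defining $\rho(z,Y_n)$ is \emph{attained} (the norm is coercive on the finite-dimensional $Y_n$); hence for $z\notin Y_n$ there is $y_0\in Y_n$ with $(z-y_0)/\|z-y_0\|$ a unit vector at distance exactly $1$ from $Y_n$. Applied inside $Y_{n+1}$ this furnishes, for each $n$, a unit vector $e_n\in Y_{n+1}$ with $\rho(e_n,Y_n)=1$; note then $\rho(e_n,Y_k)=1$ for every $k\le n$ as well, since $1=\rho(e_n,Y_n)\le\rho(e_n,Y_k)\le\|e_n\|=1$.

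The heart of the construction is a descending recursion within stage $N$. Start at the top with $x_N^{(N)}:=d_N e_N$, which has $\rho(x_N^{(N)},Y_N)=d_N$ and norm $d_N$. Given $x_N^{(j)}\in Y_{N+1}$ with $\rho(x_N^{(j)},Y_k)=d_k$ for all $k\ge j$ and $\|x_N^{(j)}\|\le 3d_j$, pass to $x_N^{(j-1)}$ by a move inside $Y_j$ --- which, by translation invariance, leaves every already-fixed distance $\rho(\cdot,Y_k)=d_k$, $k\ge j$, untouched. First translate to the point of $x_N^{(j)}+Y_j$ of least norm: this resets the norm to $\rho(x_N^{(j)},Y_j)=d_j$ and makes the distance to $Y_{j-1}$ equal to $d_j$. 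Then push outward along $e_{j-1}\in Y_j$ until the distance to $Y_{j-1}$ grows to $d_{j-1}$; this is possible precisely because the current distance $d_j$ is $\le d_{j-1}$ (here monotonicity of $(d_k)$ is used) while distances along $e_{j-1}$ increase without bound and the intermediate-value theorem applies. A one-line estimate keeps the push short enough that $\|x_N^{(j-1)}\|\le d_j+2d_{j-1}\le 3d_{j-1}$. Descending to $j=1$ yields $x_N:=x_N^{(1)}\in Y_{N+1}$ with $\|x_N\|\le 3d_1$ and $\rho(x_N,Y_k)=d_k$ for $1\le k\le N$.

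The main obstacle is the norm-Cauchy requirement on $(x_N)$. The stages just described are built independently, and since $x_N\in Y_{N+1}$ we have $\|x_{N+1}-x_N\|\ge\rho(x_{N+1},Y_{N+1})=d_{N+1}$, so the increments need not be summable when $\sum_n d_n=\infty$; yet the $x_N$ must still be made to converge --- as they do transparently in the model case $X=c_0$, $Y_n=\langle e_1,\dots,e_n\rangle$, where $x_N=\sum_{j=1}^{N+1}d_{j-1}e_j$ satisfies $\|x_M-x_N\|\le d_{N+1}\to 0$. The fix is to run the stages \emph{coherently}: at stage $N+1$ reuse the auxiliary vectors of stage $N$ wherever possible and force the unavoidable deviation of $x_{N+1}$ from $x_N$ (coming from the new level $N+1$) to lie in a direction $\rho$-almost-orthogonal to the part of $x_N$ already pinned down, so that the partial differences telescope and $\|x_M-x_N\|\le Cd_N$ for all $M\ge N$. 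Carrying out this bookkeeping --- choosing the auxiliary unit vectors with the help of compactness of the unit balls of the finite-dimensional $Y_n$ --- is exactly what requires the $Y_n$ to be finite-dimensional; when they are merely closed this control is lost, and the best one can salvage is the two-sided estimate $c\,d_n\le\rho(x_c,Y_n)\le\min(4,\tilde a)\,c\,d_n$ that is the subject of the present paper.
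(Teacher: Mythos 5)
The paper itself offers no proof of Theorem \ref{Bernstein}: it is quoted as a classical result of Bernstein and Timan, and only its finite analogue (Lemma \ref{1}) is invoked later, with citations to \cite{Borodin} and \cite{Timan}. So your attempt can only be measured against the classical argument. Your finite-stage construction is correct and is essentially that classical finite lemma: the unit vectors $e_n\in Y_{n+1}$ with $\rho(e_n,Y_n)=1$ (using attainment of best approximations in finite-dimensional subspaces), the translation to the minimal-norm representative of $x_N^{(j)}+Y_j$, and the intermediate-value push along $e_{j-1}$ all work, and they produce, for each $N$, an element $x_N\in Y_{N+1}$ with $\|x_N\|\le 3d_1$ and $\rho(x_N,Y_k)=d_k$ for $1\le k\le N$.

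The genuine gap is the passage to the limit, which you correctly identify as ``the main obstacle'' and then do not close. The claim that one can ``force the unavoidable deviation of $x_{N+1}$ from $x_N$ to lie in a direction $\rho$-almost-orthogonal to the part of $x_N$ already pinned down, so that $\|x_M-x_N\|\le C d_N$'' is a wish, not an argument: ``$\rho$-almost-orthogonal'' is undefined, a general Banach space has no directions in which $\|a+b\|\approx\max(\|a\|,\|b\|)$, and in $\ell_1$ the analogous naive choice (disjointly supported increments) gives $\|x_M-x_N\|=\sum_{j=N}^{M-1}\|x_{j+1}-x_j\|\ge\sum_{j=N+1}^{M}d_j$, which diverges when $\{d_n\}$ is not summable --- there the increments must be made to \emph{cancel}, not to be orthogonal. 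No mechanism is supplied for producing the claimed telescoping, and this estimate is precisely the crux of the theorem: it is the step that fails for infinite-dimensional $Y_n$ and is the reason Borodin (Theorems \ref{Borodin1} and \ref{100}) and Konyagin must impose hypotheses such as (\ref{condition:d}) or (\ref{Borodin2}), and the reason the present paper only obtains the two-sided bound (\ref{main}) rather than equality. A complete proof must either exhibit the coherent choice of the $e_n$ and of the translations together with a quantitative Cauchy estimate, or replace the limiting step by the compactness/diagonal argument of Timan, which exploits finite-dimensionality of the $Y_n$ well beyond mere attainment of best approximations. As written, your proposal establishes only the finite case, i.e.\ Lemma \ref{1}.
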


The above theorem can be extended to infinite-dimensional subspaces, by considering some  extra conditions. Borodin \cite{Borodin} has provided two sets of conditions. One condition is on the sequence $\{d_n\}$  and the other on both the subspaces $\{Y_n\}$ and the sequence $\{d_n\}$. In both cases, he proves the existence of an element $x\in X$ with  $\rho(x,Y_k) = d_k,\,\, k \geq 1.$ These two results are explicitly presented as follows.
\begin{theorem}(see \cite{Borodin})\label{Borodin1}
\label{Borodin:first}
Let $X$ be an arbitrary infinite-dimensional Banach space, $Y_1\subset Y_2\subset\ldots$ be an arbitrary system of strictly embedded subspaces in $X$, and the number sequence $\{d_n\}$ be such that
\begin{equation}
\label{condition:d}
d_n>\sum_{k=n+1}^{\infty}d_k
\end{equation}
for every positive integer $n\ge n_0$ for which $d_n>0$. Then there exists an element $x\in X$ such that $\rho(x,Y_n)=d_n$ for $n\ge1$.
\end{theorem}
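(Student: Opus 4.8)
The plan is to construct the desired element $x$ as the sum of a carefully chosen absolutely convergent series $x=\sum_{k\ge1}z_k$, where each $z_k$ is a "correction" vector living essentially at level $Y_k$, and then verify that the telescoping cancellation forces $\rho(x,Y_n)=d_n$ for every $n$. First I would reduce to the case where all $d_n>0$ (if $d_n=0$ from some point on, one is in a finite-dimensional-type situation handled separately, or the statement is vacuous on that tail). Then, since each inclusion $Y_n\subset Y_{n+1}$ is strict, for every $n$ I would fix a norm-one vector $e_{n+1}\in Y_{n+1}$ that is "almost orthogonal" to $Y_n$ in the sense that $\rho(e_{n+1},Y_n)\ge\theta$ for some fixed $\theta\in(0,1)$ close to $1$; existence of such vectors is the standard Riesz lemma applied to the proper closed subspace $Y_n$ of $Y_{n+1}$. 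These $e_{n+1}$ are the building blocks.

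Next I would build the series iteratively so that the partial sums $s_n=\sum_{k=1}^{n}z_k$ satisfy $s_n\in Y_{n+1}$ (so that truncating the series at level $n$ lands inside $Y_{n+1}$) while the tail $\sum_{k>n}z_k$ has norm comparable to $d_n$, using precisely the hypothesis~\eqref{condition:d} that $d_n>\sum_{k>n}d_k$. Concretely, I would take $z_k$ to be a scalar multiple of $e_{k+1}$ with coefficient of size roughly $d_k-d_{k+1}$ (adjusted by the $\theta$ factor and by small perturbations needed to kill lower-order error terms). The upper bound $\rho(x,Y_n)\le\|x-s_{n-1}\|=\|\sum_{k\ge n}z_k\|\le\sum_{k\ge n}|c_k|$ is then immediate and, by the choice of coefficients and~\eqref{condition:d}, is at most $d_n$ (in fact one arranges equality). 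For the lower bound $\rho(x,Y_n)\ge d_n$, I would use the almost-orthogonality: for any $y\in Y_n$, the component of $x-y$ "in the direction of $e_{n+1}$" is unaffected by $y$ and by the part $s_{n-1}$, and it has size at least (coefficient)$\times\theta$, which one arranges to be $\ge d_n$ after the perturbative corrections. Combining the two bounds gives $\rho(x,Y_n)=d_n$.

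The main obstacle — and the reason the condition~\eqref{condition:d} is exactly what is needed — is controlling the interaction between the correction vector $z_k$ at level $k$ and the approximation error at all earlier levels $n<k$: adding $z_k$ perturbs $\rho(x,Y_n)$ for every $n\le k$, and these perturbations must be summable and dominated so that they do not spoil the already-achieved equalities $\rho(s_{\text{partial}},Y_n)=d_n$. This is precisely where one needs the "geometric-type" decay $d_n>\sum_{k>n}d_k$: it guarantees that the cumulative effect of all future corrections on level $n$ is strictly smaller than $d_n$, leaving room to both certify the lower bound and not overshoot the upper bound. I would handle this by an inductive bookkeeping argument, at each stage $k$ choosing $z_k$ (its magnitude and a small built-in correction term) so that the finitely many constraints at levels $1,\dots,k$ are met with a uniform safety margin, and then passing to the limit using that $\sum|c_k|<\infty$ so $x$ is well-defined in $X$. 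A secondary technical point is that the $Y_n$ are only assumed closed (not finite-dimensional), so one cannot invoke compactness to get exact best approximants; the Riesz lemma with a fixed loss $\theta<1$ is the correct substitute, and one tracks the $\theta$'s through the estimates, which only costs harmless constant factors that the equality is ultimately arranged to absorb.
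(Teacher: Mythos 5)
First, a point of reference: the paper does not prove this theorem itself --- it is quoted from Borodin \cite{Borodin} --- so your proposal has to be measured against Borodin's argument. Your skeleton (a series $x=\sum z_k$ with $z_k$ built from Riesz-lemma vectors $e_{k+1}\in Y_{k+1}$ nearly orthogonal to $Y_k$, partial sums lying in $Y_{n}$, and the hypothesis \eqref{condition:d} used to dominate the tails) is the right general shape. Note in passing that the Riesz-lemma step silently requires $Y_{n+1}\not\subset\overline{Y_n}$, an assumption the paper itself flags as necessary via the $C[0,1]\subset L^\infty[0,1]$ example.

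The genuine gap is at the crucial step: fixed coefficients $c_k\approx d_k-d_{k+1}$ plus unspecified ``perturbative corrections'' do not produce the exact equality $\rho(x,Y_n)=d_n$; they produce at best two-sided inequalities, and sometimes not even that. Concretely: (i) your upper bound is $\rho(x,Y_n)\le\bigl\|\sum_{k\ge n}c_ke_{k+1}\bigr\|\le\sum_{k\ge n}|c_k|=d_n$, and the first inequality is typically strict --- in a Hilbert space with orthonormal $e_k$ one gets $\rho(x,Y_n)=\bigl(\sum_{k\ge n}c_k^2\bigr)^{1/2}$, which for $d_n=3^{-n}$ equals about $0.707\,d_n$, so equality is unattainable with this choice of coefficients; (ii) your lower bound $\theta|c_n|-\sum_{k>n}|c_k|=\theta(d_n-d_{n+1})-d_{n+1}$ can even be negative under the hypothesis \eqref{condition:d}, e.g.\ for $d_1=1$, $d_2=0.99$, $d_3=0.005,\dots$, where it gives $0.0099-0.99<0$. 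The exact equality is the entire content of Borodin's theorem (the versions that settle for two-sided bounds with constants are Konyagin's and the present paper's), and it is obtained by a different mechanism: the elements and coefficients are chosen \emph{adaptively} by an inductive argument exploiting the continuity of $t\mapsto\rho(f+tq,Y_n)$ (an intermediate-value selection at each stage), arranged so that the tail sums satisfy $\rho\bigl(\sum_{k\ge n}z_k,Y_n\bigr)=d_n$ exactly in the limit; condition \eqref{condition:d} is what guarantees that each new correction leaves enough room for all earlier constraints to be restored. Your proposal names this difficulty but does not supply the construction that resolves it, so as written it proves a strictly weaker statement.
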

\begin{theorem}(see \cite{Borodin})
\label{100}
	Let $d_0\ge d_1 \geq d_2 \geq \dots > 0$ be a non-increasing sequence converging to $0$ and $Y_1 \subset Y_2 \subset \dots \subset X$ be a system of strictly nested subspaces of  an infinite-dimensional Banach space $X$ that meets the following property: there exists a series of nonzero elements $q_n$ such that $q_n \in Y_{n + 1} \setminus Y_n$, and  the following inequality
\begin{equation}
	\label{Borodin2}
\|q\| \leq \displaystyle \frac{d_{k-1}}{d_{k}}\rho(q,Y_k)
\end{equation}
holds for all $k \in \mathbb N$ and any nonzero element $q$ in the linear span $\langle q_k, q_{k + 1},\dots \rangle$. Then there is some element $x$ in the closed linear span $\overline{\langle q_1, q_2,\dots \rangle}$ satisfying $$\rho(x,Y_n) = d_n ~~for ~all ~n \ge1.$$
\end{theorem}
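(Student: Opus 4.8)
The plan is to prove the statement first for finitely many of the subspaces, by induction, and then to pass to a limit; condition (\ref{Borodin2}) is used twice, the ratio information it carries driving the finite construction and the norm bound it carries forcing the limiting series to converge. After replacing each $q_n$ by $q_n/\|q_n\|$, which changes nothing since (\ref{Borodin2}) is positively homogeneous, fix $N$ and build $x_N\in\langle q_1,\dots,q_N\rangle$ with $\rho(x_N,Y_k)=d_k$ for $k=1,\dots,N$ as follows. Put $v_N=\dfrac{d_N}{\rho(q_N,Y_N)}\,q_N$, which is legitimate since (\ref{Borodin2}) with $q=q_N$, $k=N$ gives $\rho(q_N,Y_N)\ge d_N/d_{N-1}>0$, and then $\rho(v_N,Y_N)=d_N$. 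Having obtained $v_{j+1}=\sum_{k=j+1}^N t_kq_k$ with $\rho(v_{j+1},Y_k)=d_k$ for $k=j+1,\dots,N$, observe that $t\mapsto\rho(tq_j+v_{j+1},Y_j)$ is $1$-Lipschitz, that its value at $t=0$ is $\rho(v_{j+1},Y_j)\le\|v_{j+1}\|\le\frac{d_j}{d_{j+1}}\rho(v_{j+1},Y_{j+1})=d_j$ by (\ref{Borodin2}) applied to $q=v_{j+1}$, $k=j+1$, and that $\rho(tq_j+v_{j+1},Y_j)\ge t\,\rho(q_j,Y_j)-\|v_{j+1}\|\to\infty$ because $\rho(q_j,Y_j)>0$ by (\ref{Borodin2}) applied to $q=q_j$, $k=j$. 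The intermediate value theorem then furnishes $t_j\ge0$ with $\rho(t_jq_j+v_{j+1},Y_j)=d_j$; set $v_j=t_jq_j+v_{j+1}$. Since $q_j\in Y_{j+1}\subseteq Y_k$ for every $k>j$, adjoining $t_jq_j$ leaves $\rho(\,\cdot\,,Y_k)$ unchanged for $k>j$, so $\rho(v_j,Y_k)=d_k$ for all $k=j,\dots,N$; taking $j=1$ gives $x_N:=v_1$.

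For the limit, write $v_m^{(N)}=\sum_{k=m}^N t_k^{(N)}q_k$ for the vectors just produced in the construction with parameter $N$ (with $v_{N+1}^{(N)}:=0$). The identity $\rho(v_m^{(N)},Y_m)=d_m$ together with (\ref{Borodin2}) gives the uniform bound $\|v_m^{(N)}\|\le\frac{d_{m-1}}{d_m}\,d_m=d_{m-1}$, whence $t_k^{(N)}=\|v_k^{(N)}-v_{k+1}^{(N)}\|\le 2d_{k-1}$ and $\bigl\|\sum_{k=m}^{M}t_k^{(N)}q_k\bigr\|=\|v_m^{(N)}-v_{M+1}^{(N)}\|\le 2d_{m-1}$ for all $m\le M\le N$. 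Because $\{t_k^{(N)}\}_N$ is bounded for each fixed $k$, a diagonal extraction yields a subsequence along which $t_k^{(N)}\to t_k$ for every $k$; passing the last estimate to the limit gives $\bigl\|\sum_{k=m}^{M}t_kq_k\bigr\|\le 2d_{m-1}$, so $\sum_k t_kq_k$ is Cauchy and converges to some $x\in\overline{\langle q_1,q_2,\dots\rangle}$. Splitting $x_N-x$ (for $N$ in the subsequence) into the finite block $\sum_{k\le M}(t_k^{(N)}-t_k)q_k$, which tends to $0$ as $N\to\infty$ for each fixed $M$, plus the tails $v_{M+1}^{(N)}$ and $\sum_{k>M}t_kq_k$, each of norm $\le 2d_M$, shows $\|x_N-x\|\to0$. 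Hence, for $N\ge n$ in the subsequence, $|\rho(x,Y_n)-d_n|=|\rho(x,Y_n)-\rho(x_N,Y_n)|\le\|x-x_N\|\to0$, so $\rho(x,Y_n)=d_n$ for every $n$, as claimed.

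The step I expect to be the real obstacle is precisely this passage from the finite to the infinite statement. The vectors $x_N$ are assembled ``from the top index downward'', starting at $q_N$, so they are not partial sums of one fixed series and there is no monotone relation between $x_N$ and $x_{N+1}$; moreover the coefficients $t_k^{(N)}$ produced by the intermediate value theorem need not be unique. What makes the limit work is exactly the part of (\ref{Borodin2}) that bounds $\|q\|$ rather than merely the ratio $\rho(q,Y_k)/\rho(q,Y_{k-1})$: it yields the uniform-in-$N$ tail estimate $\|v_m^{(N)}\|\le d_{m-1}$, which simultaneously secures convergence of $\sum_k t_kq_k$ and the norm convergence $x_N\to x$ that transfers the exact equalities $\rho(x_N,Y_n)=d_n$ to the limit.
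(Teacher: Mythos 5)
The paper states this theorem without proof, citing Borodin, and your argument is correct and follows essentially the same route as Borodin's original one: a top-down finite construction in the cone over $q_j,\dots,q_N$ driven by the intermediate value theorem, followed by a limiting argument in which the norm bound in (\ref{Borodin2}) supplies the uniform tail estimate $\|v_m^{(N)}\|\le d_{m-1}$. Your diagonal extraction and the three-way splitting of $x_N-x$ close the finite-to-infinite passage cleanly, so I see no gaps.
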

 Recently Konyagin \cite{Konyagin} showed that under the same assumptions in Theorem \ref{Borodin:first}, except that the sequence $\{d_n\}$ can go to $0$ with arbitrary rate, there is $x\in X$ such that
 \begin{equation}
 \label{Konyagin}
 d_n\le \rho(x,Y_n)\le 8d_n,~\mbox{for $n\ge1$}.
 \end{equation}
 The proof is based on Theorem \ref{Borodin:first}. Note that the statements in Theorem \ref{100} are similar to that of Theorem \ref{Borodin:first}. We can now  adapt  the idea of the proof of Konyagin's \cite{Konyagin} with the Borodin's theorem \cite{Borodin} to improve the bounds of $\rho(x,Y_n)$ in (\ref{Konyagin}).

 In Konyagin's paper \cite{Konyagin}, it is assumed that $Y_n$ are closed
and strictly increasing. In Borodin's paper, this is not specified, but from the proof of his theorem
it is clear that his proof  works only under the assumption that $\overline{Y_n} $ is strictly included
in $Y_{n+1}$.   Necessity of this assumption on subspaces is illustrated by the following:
\begin{example}
Let $X = L^{\infty}[0,1]$ and consider ${C}[0,1] \subset L^{\infty} [0,1]$. Define the subspaces  of $X$ as follows:

\begin{itemize}
\item  $ Y_1 =  \mbox{all polynomials}$
\item  $Y_2 =$ span$ [Y_1 \cup f_1],$
where $ f_1 \in {C}[0,1] \setminus Y_1$,
\item  $ Y_{n+1} = $span$[Y_n \cup f_{n}]$
where $f_{n} \in {C}[0,1] \setminus Y_n$.
\end{itemize}
Observe that by Weierstrass Theorem, $\overline {Y_n} = {C}[0,1] $ for any $n \ge1$.
Take any  $ f \in L^{\infty}[0,1]$ and consider the following cases:
\begin{enumerate}
  \item If  $ f \in {C}[0,1]$ , then

$$\mbox{dist} (f,Y_n) = \mbox{dist}(f,{C}[0,1]) = 0~ \mbox{for any}~ n\ge1.$$
  \item If $ f \in L^{\infty}[0,1] \setminus {C}[0,1] $, then
$$\mbox{dist}(f, Y_n) = \mbox{dist}(f, {C}[0,1]) = d > 0~ \mbox{ (independent of $n$)}.$$
\end{enumerate}
Hence in this case BLT does not hold. (Note that in the above, we used the fact that $\mbox{dist}(f,Y) = \mbox{dist}(f,\overline{Y})$.)
\end{example}
Note that Borodin's condition on sequence $\{d_n\}$, namely
$d_n>\sum\limits_{k=n+1}^{\infty}d_k$
 is not satisfied when $d_n= \displaystyle\frac{1}{2^n}$, however it is satisfied when $d_n =\displaystyle\frac{1}{(2+\epsilon)^n} $ for $\epsilon > 0$.  Thus, it is natural to ask
 whether the condition (\ref{condition:d}) is necessary for the results in Theorem \ref{Borodin:first} to hold?

 In \cite {Ak-Pe} , it is shown that  by weakening the condition (\ref{condition:d}) in Theorem \ref{Borodin1} above yields to an improvement of the bounds in the inequality  (\ref{Konyagin}) in Konyagin's theorem. In this paper, we take a different approach. We concentrate on the Borodin's second condition on subspaces, namely on the inequality  (\ref{Borodin2}) of Theorem \ref{100}  above and obtain better bounds for the inequalities in (\ref{Konyagin}).  The statements in Theorem \ref{100} are similar to that of Theorem \ref{Borodin:first}, thus, we can now  adapt  the idea of the proof of Konyagin's \cite{Konyagin} with the Borodin's theorem \cite{Borodin} to improve the bounds of $\rho(x,Y_n)$ in (\ref{Konyagin}).

\section{Main Result}
Let $X$ be an arbitrary infinite-dimensional Banach space. Given $Y_1 \subset Y_2 \subset \dots \subset X$, an arbitrary system of strictly embedded closed subspaces and $d_1 \geq d_2 \geq \dots \geq 0$, a non-increasing sequence converging to 0. The goal of this paper is to improve Konyagin's result (\ref{Konyagin}) under conditions on subspaces $\{Y_n\}$ and $\{d_n\}$. It is worth noting that if $\{Y_n\}$ and $\{d_n\}$ are finite sequences, we have the best approximations of the sequence $d_n$ in terms of the distances $\rho(x,Y_n)$, i.e., the following lemma holds:
\begin{lemma}
\label{1}
		Let $d_1 > d_2 > \dots > d_n >0$ be a finite decreasing sequence and $Y_1 \subset Y_2 \subset \dots \subset Y_n$ be a system of strictly nested closed subspaces of Banach space $X$. Then for any $c\in(0,1]$, there exists  an element $x_c \in X$ such that   $\rho(x_c,Y_k) = cd_k$, for $k = 1,\dots n$.
	\end{lemma}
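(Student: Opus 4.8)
The plan is to reduce to the case $c=1$ and then induct on the number $n$ of subspaces. For $c\in(0,1]$ the numbers $cd_1>cd_2>\dots>cd_n>0$ again form a finite strictly decreasing sequence of positive reals, so it is enough to show: for \emph{any} such sequence $d_1>\dots>d_n>0$ and any strictly nested closed subspaces $Y_1\subsetneq\dots\subsetneq Y_n$ of $X$, there is $x\in X$ with $\rho(x,Y_k)=d_k$ for $k=1,\dots,n$; applying this with each $d_k$ replaced by $cd_k$ then finishes the proof. I will also use throughout that $Y_n\subsetneq X$ — this is forced by the conclusion, since $\rho(x,Y_n)=d_n>0$ requires $x\notin Y_n$ — and hence every $Y_k$ is a proper closed subspace of $X$.

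First I would dispatch the base case $n=1$. Choosing any $u\in X\setminus Y_1$ (possible as $Y_1\subsetneq X$), closedness of $Y_1$ gives $\rho(u,Y_1)>0$, and $t\mapsto\rho(tu,Y_1)=t\,\rho(u,Y_1)$ is continuous and maps $[0,\infty)$ onto $[0,\infty)$, so some $t_0>0$ yields $\rho(t_0u,Y_1)=d_1$; take $x=t_0u$.

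For the inductive step, assume the statement for every family of $n$ strictly nested proper closed subspaces, and let $Y_1\subsetneq\dots\subsetneq Y_{n+1}$ and $d_1>\dots>d_{n+1}>0$ be given. I would first apply the inductive hypothesis to the $n$ subspaces $Y_2\subsetneq\dots\subsetneq Y_{n+1}$ with the sequence $d_2>\dots>d_{n+1}$, getting $x_0\in X$ with $\rho(x_0,Y_k)=d_k$ for $k=2,\dots,n+1$. It then remains to adjust the value at $Y_1$ without disturbing the others. Pick $w\in Y_2\setminus Y_1$ and consider $\varphi\colon Y_2\to[0,\infty)$, $\varphi(v)=\rho(x_0+v,Y_1)$; this map is $1$-Lipschitz (hence continuous), and $Y_2$ is convex, so $\varphi(Y_2)$ is an interval. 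Since $Y_1\subseteq Y_2$ one has $\{v-y:\,v\in Y_2,\ y\in Y_1\}=Y_2$, so $\inf_{v\in Y_2}\varphi(v)=\rho(x_0,Y_2)=d_2$; on the other hand $\varphi(tw)\ge t\,\rho(w,Y_1)-\|x_0\|\to\infty$ as $t\to\infty$. Thus $\varphi(Y_2)$ is an interval with infimum $d_2$ and no upper bound, so it contains $(d_2,\infty)$, and since $d_1>d_2$ there is $v^{\ast}\in Y_2$ with $\varphi(v^{\ast})=d_1$. Setting $x=x_0+v^{\ast}$ and using $v^{\ast}\in Y_2\subseteq Y_k$ for all $k\ge2$, we get $\rho(x,Y_k)=\rho(x_0,Y_k)=d_k$ for $k=2,\dots,n+1$ and $\rho(x,Y_1)=\varphi(v^{\ast})=d_1$, which closes the induction.

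The one point that needs care — and the reason the induction is set up this way — is its \emph{direction}. One cannot peel off the largest subspace $Y_{n+1}$: any correction of $x$ that preserves $\rho(\cdot,Y_{n+1})$ would in general change $\rho(\cdot,Y_k)$ for the smaller $k$. Peeling off the smallest subspace $Y_1$ is exactly what works, because corrections can be taken from $Y_2$, which leaves all distances to $Y_k$ with $k\ge2$ untouched, while the connectedness of $Y_2$ together with the unboundedness of $\varphi$ there forces the single remaining target $d_1$ (which lies above the floor $d_2$) to be attained exactly. Everything else is routine: the continuity/Lipschitz estimates for $\rho$ and the identity $\inf_{v\in Y_2}\rho(x_0+v,Y_1)=\rho(x_0,Y_2)$.
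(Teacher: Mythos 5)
Your argument is correct, but it takes a genuinely different route from the paper: the paper's proof of this lemma is essentially a citation --- it invokes \cite{Borodin} and \cite{Timan} for the case $c=1$ and then rescales the sequence by $c$ --- whereas you keep the same rescaling reduction but then actually prove the $c=1$ case from scratch by induction on $n$, peeling off the \emph{smallest} subspace, correcting by an element $v^{\ast}\in Y_2$ (which leaves $\rho(\cdot,Y_k)$ untouched for $k\ge 2$), and using the connectedness of $Y_2$ plus the unboundedness and the identity $\inf_{v\in Y_2}\rho(x_0+v,Y_1)=\rho(x_0,Y_2)=d_2$ to hit the target $d_1>d_2$ exactly. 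All the individual steps check out (the $1$-Lipschitz property of $\varphi$, the translation invariance $\rho(x_0+v,Y_k)=\rho(x_0,Y_k)$ for $v\in Y_k$, and the fact that a connected unbounded subset of $[d_2,\infty)$ with infimum $d_2$ contains $(d_2,\infty)$), and the direction of the induction is indeed the only delicate choice. What your approach buys is a self-contained, elementary proof valid for arbitrary closed (not necessarily finite-dimensional) proper subspaces, at the cost of having to make explicit the hypothesis $Y_n\ne X$, which the lemma's statement leaves implicit and which you correctly flag as forced by the conclusion $\rho(x,Y_n)=d_n>0$; the paper's version is shorter but defers the substance to the cited results.
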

	\begin{proof}
		First, from \cite{Borodin} and \cite{Timan}, we see Lemma \ref{1} is true for $c=1$. Next for any $c\in(0,1]$, let $\tilde d_k=cd_k$. It is easy to see the sequence of numbers $\{\tilde d_k\}$ satisfies Lemma \ref{1}, therefore there exists  an element $x_c \in X$ such that   $\rho(x,Y_k) = \tilde d_k$, for $k = 1,\dots, n$.
	\end{proof}

Now we consider the case when $\{Y_n\}$ and $\{d_n\}$ are infinite sequences and state our main result.


\begin{theorem}
\label{101}
	Let $X$ be an arbitrary infinite-dimensional Banach space, and let $Y_1 \subset Y_2 \subset \dots \subset X$ be an arbitrary system of strictly embedded closed linear subspaces. Let $ \{d_n\}$ be a non-increasing sequence of real numbers converging to $0$. Assume that, for any sequence of elements $q_i$ such that $q_n\in Y_{n+1}\setminus Y_n$ for all $n$, we have that
  \begin{equation}
  \label{boundan}
  \inf\limits_{n\ge1} a_n  > 0,
  \end{equation}
  where for each $n\ge1$, $a_n $ is defined by
 $$ a_n = \inf_{l \geq n} \,  \inf_{q \in \langle q_l, q_{l+1},\dots \rangle} \frac{\rho(q,Y_l)}{||q||}$$
for these elements $q_i$.  Then for any constant $c\in(0,1]$, there exists an element $x_{c}\in X$ such that
\begin{equation} \label{main}
cd_n \leq \rho(x_{c}, Y_n) \leq \min(4,\tilde{a})cd_n,
\end{equation}
where
$$\tilde{a} =\sup_{i\ge1}\sup_{\left \{ q_{i}  \right \}}\left \{  a_{n_{i+1}-1}^{-3}\right \}$$

and  $n_i$ satisfies
\begin{eqnarray}
\label{ni}
&&n_1=1;\nonumber\\
&&{n_{i+1}}= \min \left \{ n\ge1 : \frac{d_n}{{a_{n}^{2}}} \leq d_{n_{i}}\right \},~i\geq 1.
\end{eqnarray}
\end{theorem}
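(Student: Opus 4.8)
The plan is to transfer the problem, via the subsequence $\{Y_{n_i}\}$, to a single application of Borodin's Theorem~\ref{100}, supplemented by a parallel Konyagin-type construction supplying the absolute constant $4$. Since hypothesis~\eqref{boundan} is assumed for \emph{every} sequence $\{q_i\}$ with $q_n\in Y_{n+1}\setminus Y_n$, we are free to fix a convenient one; we may also arrange $a_n<1$ for all $n$, which keeps the recursion defining $\{n_i\}$ strictly increasing. As $a_n$ is non-decreasing with $a_n\ge a_1>0$, we have $d_n/a_n^{2}\le d_n/a_1^{2}\to0$, so each defining set is nonempty and the recursion yields $n_1<n_2<\cdots\to\infty$ (we also assume $d_n>0$ for all $n$, the tail on which $d_n=0$ being vacuous). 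The engine is the \emph{block estimate}: for $n_i\le n\le n_{i+1}-1$,
$$a_{n_{i+1}-1}^{2}\,d_{n_i}\;<\;d_n\;\le\;d_{n_i},$$
the right inequality being monotonicity of $\{d_n\}$ and the left one following from $d_n\ge d_{n_{i+1}-1}$ together with the minimality of $n_{i+1}$, which forces $d_{n_{i+1}-1}/a_{n_{i+1}-1}^{2}>d_{n_i}$. This is precisely the bound on how far $\{d_n\}$ may move between consecutive nodes that the factor $a_n^{2}$ in the recursion was put there to guarantee.

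Next I set up the data for Theorem~\ref{100}: let $Z_i:=Y_{n_i}$ (strictly increasing closed subspaces), let $\tilde q_i:=q_{n_{i+1}-1}$, a nonzero vector lying in $Z_{i+1}\setminus Z_i$, and put
$$\tilde d_i:=\frac{c\,d_{n_i}}{a_{n_{i+1}-1}}\quad(i\ge1),\qquad \tilde d_0:=\frac{\tilde d_1}{a_1}.$$
Then $\{\tilde d_i\}$ is positive and tends to $0$, and one checks $\tilde d_i\le a_{n_i}\tilde d_{i-1}$ for every $i\ge1$ (for $i\ge2$ from the minimality inequality $d_{n_i}\le a_{n_i}^{2}d_{n_{i-1}}$ and monotonicity of $\{a_n\}$; for $i=1$ by the choice of $\tilde d_0$), so in particular $\{\tilde d_i\}$ is non-increasing. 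Moreover $\langle\tilde q_i,\tilde q_{i+1},\dots\rangle=\langle q_{n_{i+1}-1},q_{n_{i+2}-1},\dots\rangle\subseteq\langle q_{n_i},q_{n_i+1},\dots\rangle$, so the definition of $a_{n_i}$ gives $\rho(\tilde q,Z_i)\ge a_{n_i}\|\tilde q\|\ge(\tilde d_i/\tilde d_{i-1})\|\tilde q\|$ for every nonzero $\tilde q$ in that span; equivalently $\|\tilde q\|\le(\tilde d_{i-1}/\tilde d_i)\,\rho(\tilde q,Z_i)$, which is exactly condition~\eqref{Borodin2} for the data $(Z_i),(\tilde q_i),(\tilde d_i)$. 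Theorem~\ref{100} then furnishes $x_c\in\overline{\langle\tilde q_1,\tilde q_2,\dots\rangle}$ with $\rho(x_c,Y_{n_i})=\tilde d_i$ for all $i\ge1$.

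It remains to estimate $\rho(x_c,Y_n)$ for general $n$; fix $i$ with $n_i\le n\le n_{i+1}-1$. For the upper bound, $Y_{n_i}\subseteq Y_n$ gives $\rho(x_c,Y_n)\le\rho(x_c,Y_{n_i})=\tilde d_i=c\,d_{n_i}/a_{n_{i+1}-1}$, and the block estimate ($d_{n_i}<d_n/a_{n_{i+1}-1}^{2}$) upgrades this to $\rho(x_c,Y_n)\le c\,d_n/a_{n_{i+1}-1}^{3}\le\tilde a\,c\,d_n$. For the lower bound, write a finite combination $w\in\langle\tilde q_1,\tilde q_2,\dots\rangle$ as $w=w'+w''$ with $w'\in\langle\tilde q_1,\dots,\tilde q_{i-1}\rangle\subseteq Z_i$ and $w''\in\langle\tilde q_i,\tilde q_{i+1},\dots\rangle\subseteq\langle q_{n_{i+1}-1},q_{n_{i+1}},\dots\rangle$; since $w'\in Y_{n_i}\subseteq Y_{n_{i+1}-1}$,
$$\rho(w,Y_{n_{i+1}-1})=\rho(w'',Y_{n_{i+1}-1})\ge a_{n_{i+1}-1}\|w''\|\ge a_{n_{i+1}-1}\,\rho(w'',Y_{n_i})=a_{n_{i+1}-1}\,\rho(w,Y_{n_i}),$$
and letting $w\to x_c$ (density, and $1$-Lipschitz continuity of $\rho(\cdot,Y_{n_{i+1}-1})$ and $\rho(\cdot,Y_{n_i})$) gives $\rho(x_c,Y_{n_{i+1}-1})\ge a_{n_{i+1}-1}\rho(x_c,Y_{n_i})=c\,d_{n_i}$; hence $\rho(x_c,Y_n)\ge\rho(x_c,Y_{n_{i+1}-1})\ge c\,d_{n_i}\ge c\,d_n$. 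This proves \eqref{main} with $\min(4,\tilde a)$ replaced by $\tilde a$. Running in parallel the Konyagin-type argument of \cite{Konyagin} — a $\tfrac12$-type thinning of $\{Y_n\}$ fed into Theorem~\ref{Borodin1}, in the sharper form available under the present hypotheses (cf.\ \cite{Ak-Pe}) — yields a second admissible element satisfying $\rho\le4\,c\,d_n$, and keeping whichever of the two elements carries the smaller constant gives \eqref{main} as stated.

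The crux, and the place I expect the argument to be most delicate, is the lower estimate at the \emph{non-nodal} indices (those strictly between $n_i$ and $n_{i+1}$), about which Theorem~\ref{100} says nothing directly. Forcing it through is exactly what dictates the inflated nodal targets $\tilde d_i=c\,d_{n_i}/a_{n_{i+1}-1}$: one must keep enough slack so that, after the unavoidable loss of a factor $a_{n_{i+1}-1}$ in passing from $\|w''\|$ to $\rho(w'',Y_{n_{i+1}-1})$, one still clears $c\,d_{n_i}$. That inflation, compounded with the within-block oscillation of $\{d_n\}$ (a further factor $a_{n_{i+1}-1}^{2}$ in the upper bound), is precisely the origin of the exponent $-3$ in the definition of $\tilde a$; the remaining items — well-posedness of the recursion and admissibility of $\{\tilde d_i\}$ for Theorem~\ref{100} — are routine.
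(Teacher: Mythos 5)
Your argument is correct and arrives at the same constant $\tilde a=\sup_i\sup_{\{q_i\}}a_{n_{i+1}-1}^{-3}$, but it routes the crucial lower bound differently from the paper. Both proofs share the same skeleton: thin the chain at the nodes $n_i$, inflate the nodal targets by a factor $1/a$, verify Borodin's condition \eqref{Borodin2} from the recursion $d_{n_{i+1}}\le a_{n_{i+1}}^2d_{n_{i}}$ and the monotonicity of $\{a_n\}$, and extract the upper bound from the block estimate $a_{n_{i+1}-1}^2d_{n_i}\le d_n$. The paper, however, applies Theorem \ref{100} to an \emph{augmented} chain $\{Z_j\}$ that interleaves both $Y_{n_i}$ (with target $e_{j_i}=c\,d_{n_i}/a_{m_{j_i+1}}$) and $Y_{n_{i+1}-1}$ (with target $c\,d_{n_i}$), so that the lower bound $\rho(x_c',Y_n)\ge\rho(x_c',Y_{n_{i+1}-1})=c\,d_{n_i}$ at non-nodal $n$ is an exact output of Borodin's theorem; the cost is the double-indexing $(j_i,m_j)$ and a three-case verification of $e_{j+1}\le a_{m_{j+1}}e_j$. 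You instead feed Borodin only the nodes $Y_{n_i}$ and recover $\rho(x_c,Y_{n_{i+1}-1})\ge a_{n_{i+1}-1}\,\rho(x_c,Y_{n_i})$ by hand, splitting $w=w'+w''$ and passing to $x_c$ by density and the $1$-Lipschitz property of $\rho(\cdot,Y)$ --- which is legitimate precisely because your generators $\tilde q_j=q_{n_{j+1}-1}$, $j\ge i$, all lie in $\langle q_{n_{i+1}-1},q_{n_{i+1}},\dots\rangle$, so the definition of $a_{n_{i+1}-1}$ applies to $w''$. Your route trades the combinatorial bookkeeping for one extra (elementary) limiting argument, and it has the expository virtue of showing exactly why the nodal targets must be inflated by $1/a_{n_{i+1}-1}$; the final combination with the constant-$4$ element from \cite{Ak-Pe} is the same in both proofs. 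Two details worth writing out: a sequence $\{q_i\}$ with $a_n<1$ for all $n$ does exist (take $q_n=y_n+z_n$ with $z_n\in Y_{n+1}\setminus Y_n$ fixed and $y_n\in Y_n$ of large norm, so that $\rho(q_n,Y_n)/\|q_n\|<1$), and it is this strict inequality, combined with $d_n\ge d_{n_i}$ and $a_n\le a_{n_i}$ for $n\le n_i$, that guarantees the recursion \eqref{ni} produces a strictly increasing sequence --- a point the paper itself only handles implicitly by allowing $n_{i+1}=n_i$ in its $j$-indexing.
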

\begin{proof}
	If $d_{n}=0$ for some $n$, then Theorem \ref{101} holds by using Lemma 1 in \cite{Borodin} and Lemma \ref{1}.	
	Thus, we will assume that $d_n>0$ for all $n\ge1$. Take the sequence $\{n_i\}$ defined in (\ref{ni}). Define a sequence of positive integers $\{j_i\}$ such that
	$$j_1=1,~ j_{i+1}=\left\{
	\begin{array}{lll}
	&j_{i}+1&~\mbox{if $n_{i+1}=n_{i}$};\\
	&j_{i}+2&~\mbox{if $n_{i+1}>n_{i}$},
	\end{array}\right.~\mbox{for $i\ge1$}.$$
	Let $$  m_{j}=\left\{
	\begin{array}{lll}
	&n_{i}&~\mbox{if $j=j_{i}$};\\
	&n_{i+1}-1&~\mbox{if $j_{i}<j<j_{i+1}$}.
	\end{array}\right.$$
	Clearly the sequence $ \left \{ m_{j} \right \}_{j\geq 1} $ is strictly increasing. Now, we define the sequences of subspaces $ \left \{ Z_{j} \right \} _{j\geq 1}$ and numbers $ \left \{ e_{j} \right \} _{j\geq 1}$ to be
\begin{eqnarray*}
 Z_{j}&=&Y_{m_{j}},\\
e_{j}&=&\left\{
	\begin{array}{lll}
	&\frac{c}{a_{m_{j+1}}}d_{n_{i}}&~\mbox{if $j=j_{i}$ for some $i$};\\
	&cd_{n_{i}}&~\mbox{if $j_{i}<j<j_{i+1}$}.
	\end{array}\right.
	\end{eqnarray*}
	Hence, for any $j\geq1$, 3 cases follow:\\
	Case 1:\\
	if $j_i<j < j_{i+1 }$ for some $i$, then $j+1= j_{i +1}$. By the definition of $e_j$, the facts that $n_{i+1}=m_{j+1}$ and $\{a_n\}_n$ is increasing, we obtain \\
	$$e_{j+1}= \frac{c}{a_{m_{j+2}}}d_{n_{i+1}}\leq \frac{c}{a_{m_{j+2}}}a_{n_{i+1}}^{2}d_{n_{i}} = \frac{c}{a_{m_{j+2}}}a_{m_{j+1}}^{2}d_{n_{i}}\leq a_{m_{j+1}}e_{j}.$$
	Case 2:\\
	if $ j= j_{i }$ for some $i$ and $j+1< j_{i +1}$, then
	$$e_{j+1}= cd_{n_i} =a_{m_{j+1}}e_{j}.$$
	Case 3:\\
	if $j= j_{i }$ for some $i$ and $j+1= j_{i +1}$, then
	$$e_{j+1}= \frac{c}{a_{m_{j+2}}}d_{n_{i+1}}\leq \frac{c}{a_{m_{j+2}}}a_{n_{i+1}}^{2}d_{n_{i}}=\frac{ a_{m_{j+1}}a_{n_{i+1}}^2}{a_{m_{j+2}}}e_{j}\leq a_{m_{j+1}}e_{j}.$$
	Thus, we conclude that $$e_{j+1}\leq a_{m_{j+1}}e_{j},~\mbox{for all}~j\geq1.$$
	Note that  for all $q\in \langle q_{m_{j+1}},q_{m_{j+1}+1},\ldots\rangle$,
	$$\frac{e_{j+1}}{e_{j}}\leq a_{m_{j+1}}\leq \frac{\rho (q,Y_{m_{j+1}})}{||q||}=\frac{\rho (q,Z_{j+1})}{||q||}.$$
	Therefore we can apply Theorem \ref{100} to the sequence $ \left \{ Z_{j} \right \} _{j\geq 1}$  of subspaces and  the sequence of numbers $ \left \{ e_{j} \right \} _{j\geq 1}$, to obtain the existence of an element $x_{c}' \in\overline{< q_{1},q_{2},\ldots> }$ such that $$\rho (x_{c}',Z_{j})=e_{j},~\mbox{for}~ j\geq1.$$
	If $n=n_{i}$ for some $i$, then for $j=j_{i}$ we have $n=m_{j},~Y_{n}=Z_{j}$ and
	$$\rho (x_{c}',Y_{n})=\rho (x_{c}',Z_{j})=e_{j}=\frac{c}{a_{m_{j+1}}}d_{n}.$$
	Now, let $n_{i}<n<n_{i+1}$ for some $i$ and $j=j_{i}.$ Then
	\begin{equation}
\label{mn}
m_{j}=n_{i}<n\leq n_{i+1}-1=m_{j+1}.
\end{equation}
It leads to the lower bound of $\rho (x_{c}',Y_{n})$ in terms of $d_{n}$:
	\begin{equation}
	\label{20}
	\rho (x_{c}',Y_{n})\geq \rho (x_{c}',Z_{j+1})=e_{j+1}=cd_{n_{i}}\geq cd_{n}.
	\end{equation}
	To obtain the upper bound of $\rho (x_{c}',Y_{n})$ we observe from (\ref{mn}) that
	\begin{center}
	$\rho (x_{c}',Y_{n})\leq \rho (x_{c}',Y_{n_{i}})=\rho (x_{c}',Z_{j})=e_{j}=\frac{c}{a_{m_{j+1}}}d_{n_{i}}.$
	\end{center}
	Since $n_{i}<n_{i+1}-1<n_{i+1}$, then we have
	\begin{center}
		$a_{n_{i+1}-1}^{2} d_{n_{i}}\leq d_{n_{i+1}-1} \leq d_{n}.$
	\end{center}
	Consequently,
	\begin{equation}
	\label{40}
	\rho (x_{c}',Y_{n})\leq \frac{c}{a_{m_{j+1}}}d_{n_{i}}\leq \frac{c}{{a_{n_{i+1}-1}^{2}a_{m_{j+1}}}}d_{n}\leq \frac{c}{a_{n_{i+1}-1}^{3}}d_{n}.
	\end{equation}
	It follows from (\ref{20}) and (\ref{40}) that
	$$cd_n \leq cd_{n_{i}}\leq\rho(x_{c}', Y_n) \leq \frac{c}{a_{n_{i+1}-1}^{3}}d_{n}.$$
	Notice that $a_{n_{i+1}-1}$ only depends on the sequences $\{n_i\}$ and $\{q_{i}\}$. Therefore by taking supremum over $\{q_{i}\}$ and $\{n_i\}$ we proved
\begin{equation}
\label{e1}
cd_n \leq \rho(x_{c}', Y_n) \leq \tilde{a}cd_n.
\end{equation}
Also note that in  \cite{Ak-Pe} it is shown that, for the same sequences $\{d_n\}$ and $\{Y_n\}$ as in Theorem \ref{101}, there is another element $x_c''\in X$ such that
\begin{equation}
\label{e2}
cd_n \leq \rho(x_{c}'', Y_n) \leq 4cd_n.
\end{equation}
Therefore if $\tilde a\le 4$,
$$
cd_n \leq \rho(x_{c}', Y_n)\le \tilde acd_n=\min(4,\tilde{a})cd_n;
$$
if $\tilde a>4$,
$$
cd_n \leq \rho(x_{c}'', Y_n)\le 4cd_n=\min(4,\tilde{a})cd_n.
$$
Thus by taking
$$
x_c=\left\{\begin{array}{ll}
x_c'&~\mbox{if $\tilde a\le 4$};\\
x_c''&~\mbox{if $\tilde a>4$},
\end{array}\right.
$$
we have proven Theorem \ref{101}.
\end{proof}
\begin{remark}
One can observe that  the inequalities in (\ref{main}) is stronger than the inequalities given by Konyagin in \cite{Konyagin}.
\end{remark}

\end{document}